\newcommand{\R}{\mathbb{R}}
\newcommand{\Z}{\mathbb{Z}}
\newcommand{\nrm}[1]{\|#1\|}
\newcounter{main}
\numberwithin{equation}{section}
\newtheorem{theorem}{Theorem}[section]
\newtheorem{lemma}[theorem]{Lemma}
\newtheorem{maintheorem}{Theorem}
\newcommand{\blanksquare}{\,\,\,$\sqcup\!\!\!\!\sqcap$}
\newcounter{example}
{{\stepcounter{example}}{\flushleft {\bf Example \arabic{example}:}}}%
{\par}
\title[The $C^0$ general density theorem for geodesic flows]
{The $C^0$ general density theorem for geodesic flows}
\author[M. Bessa]{M\'{a}rio Bessa}
\address{Departamento de Matem\'atica, Universidade da Beira Interior, Rua Marqu\^es d'\'Avila e Bolama,
  6201-001 Covilh\~a,
Portugal.}
\email{bessa@ubi.pt}
\author[M. J. Torres]{Maria Joana Torres}
\address{Centro de Matem\'atica, Universidade do Minho e Departamento de Matem\'atica e Aplica\c{c}\~{o}es, Universidade do Minho,
Campus de Gualtar,
4700-057 Braga, Portugal}
\email{jtorres@math.uminho.pt}
\begin{document}

\begin{abstract}
Given a closed Riemannian manifold, we prove the $C^0$-general density theorem for continuous geodesic flows.
More precisely, that there exists a residual (in the $C^0$-sense) subset of the continuous geodesic flows such that, in that residual subset, the geodesic flow exhibits dense closed orbits.
\end{abstract}

\maketitle

{\tiny\noindent\emph{MSC 2000:} primary 28D05; 54H20; secondary 37B20.\\
\emph{Keywords:} Periodic points, topological dynamics, closing lemma.\\}

\begin{section}{Introduction: basic definitions and statement of the results}

\subsection{The geodesic flow and mechanic Hamiltonians}

A Riemannian manifold $(M,g)$ is a $C^\infty$-manifold with an Euclidean inner product $g_x$ in each $T_xM$ which varies smoothly with respect to $x \in M$.
So a Riemannian metric is a smooth section $g_M \rightarrow \mbox{\rm Symm}_2^+(TM)$, where $\mbox{\rm Symm}_2^+(TM)$ is the set of positive bilinear and symmetric
forms in $TM$. Pick a Riemannian metric on $TM$ and denote by $d_{TM}(\cdot,\cdot)$ the geodesic distance associated to it on $TM$.
Note that since all Riemannian metrics are Lipschitz equivalent on compact subsets, the choice of the metric on $TM$ is not important.

The geodesic flow of the metric $g$ of class $C^2$ is the flow on $TM$ defined by 
$$\begin{array}{cccl}
\phi^t_g: & T M & \longrightarrow & T M \\ \
{} & (x,v) & \longmapsto & (\gamma^g_{x,v}(t), \dot{\gamma}_{x,v}^g(t)), \\
\end{array}
$$
where, $\gamma_{x,v}^g: \R \longrightarrow M$ denotes the  geodesic starting at $x$ with initial velocity $v$, $x \in M$, $v \in T_x M$.
Since the speed of the geodesic is constant, we can consider the flow restricted to $UT(M):=\{(x,v) \in TM:\,g_x(v,v)=1\}$.  

Clearly, the orbit of a point $(x, v) \in  UT(M)$ consists of the
tangent vectors to the geodesic defined by $(x, v)$ and periodic orbits for the geodesic flow correspond to closed geodesics on $M$.
We write $Per(\phi^t_g) \subset UT(M)$ for the set of periodic orbits.
Recall that $(x,v) \in UT(M)$ belongs to the \emph{nonwandering set} of $\phi^t_g$, denoted by $\Omega(\phi^t_g)$, if for every neighborhood $V$ of $(x,v)$ there exists $t_n \rightarrow \infty$ such that $\phi^{t_n}_g(V) \cap V \neq \emptyset$. We say that $(x,v) \in UT(M)$ is a $\phi^t_g$-\emph{recurrent point}, and we denote this set by $R(\phi^t_g)$, if given any neighborhood $V$ of $(x,v)$, there exists $t_n$ such that $\phi^{t_n}_g((x,v))\in V$. We have $R(\phi^t_g)\subset \Omega(\phi^t_g)$. Along this paper we are going to consider that $M$ is closed and with dimension $\geq 2$. We observe that the geodesic flow keeps the Liouville volume invariant. Thus, Poincar\'e's recurrence theorem (see e.g. ~\cite{KH}) asserts that Lebesgue almost every point is recurrent. Therefore, we conclude that Lebesgue almost every point is nonwandering and so $UT(M)=\Omega(\phi^t_g)$.

The geodesic flow is a subclass of the Hamiltonian flows. Actually, for the metric $g$ on $M$ we have the associated Hamiltonian defined in the cotangent bundle $T^*M$ endowed with the canonical symplectic form by:
$$\begin{array}{cccl}
H\colon & T^*M & \longrightarrow & \mathbb{R} \\ \
{} & (x,p) & \longmapsto & \frac{1}{2}(\| p \|_x^*)^2, \\
\end{array}
$$where $\|\cdot\|^*$ stands for the dual norm on the cotangent bundle. Usually, we call $H$ the \emph{mechanical Hamiltonian} because it is given only by the kinetic energy and free of potential.

\subsection{Statement of the result and some history}

 Let $\mathcal{R}^k(M)$ denote the set of $C^k$ Rie\-manni\-an metrics in $M$. For $\ell\in\{1,...,k\}$ we can endow $\mathcal{R}^k(M)$ with the $C^\ell$-topology. A property in $\mathcal{R}^k(M)$, endowed with the $C^\ell$ topology  is said to be $C^\ell$-\emph{generic} if it holds in a $C^\ell$-residual subset of $\mathcal{R}^k(M)$.
In particular, since when $\ell=k$ the set $\mathcal{R}^k(M)$ is a Baire space, by Baire's Category theorem (see~\cite{Ku}) a $C^k$-residual subset of $\mathcal{R}^k(M)$ is $C^k$-dense in $\mathcal{R}^k(M)$. 
Consequently, a $C^k$-residual subset of $\mathcal{R}^k(M)$ is $C^\ell$-dense in $\mathcal{R}^k(M)$, for any $\ell\in\{1,...,k\}$.

Let $C^0([a,b]\times UT(M),UT(M))$ stands for the set of continuous maps from $[a,b]\times UT(M)$ into $UT(M)$. We say that $\phi$ is a $C^0$ flow on the unit tangent bundle $UT(M)$ if $\phi\colon \mathbb{R}\times UT(M)\rightarrow UT(M)$ is such that $\phi(t,\cdot)$ is a homeomorphism and $\phi(\cdot, (x,v))$ is a $C^1$ curve on $UT(M)$. As usually we denote a flow by $\phi^t$. We are interested in volume-preserving flows, i.e., the Lebesgue measure is $\phi^t$-invariant. We let $\mathscr{F}^0(UT(M))$ be the set of volume-preserving $C^0$ flows on $UT(M)$. For any $a<b$ we define the map:
$$\begin{array}{cccc}
\rho_{a,b}: & \mathscr{F}^0(UT(M)) & \longrightarrow & C^0([a,b]\times UT(M),UT(M)) \\
& \phi^t & \mapsto & \phi^t|_{[a,b]\times UT(M)}
\end{array}
$$

The compact-open topology, that we shall denote by $\tau$, is the smallest one making $\rho_{a,b}$ continuous (see \cite{PR,Hi}). We endow $ \mathscr{F}^0(UT(M))$ with $\tau$. Let $\mathscr{G}^1(UT(M))$ be the set of geodesic flows associated to metrics in $\mathcal{R}^2(M)$. We define the set of \emph{continuous geodesic flows} by the $\tau$-closure of $\mathscr{G}^1(UT(M))$ and denote this set by $\mathscr{G}^0(UT(M))\subset \mathscr{F}^0(UT(M))$. Observe that $(\mathscr{G}^0(UT(M)), \tau)$ is Baire (\cite[\S2.4]{Hi}). Moreover, if $\phi^t_{g_1}, \phi^t_{g_2}\in \mathscr{G}^1(UT(M))$ and $g_1$ and $g_2$ are $C^1$-close, then as a consequence of Gronwall's inequality (see e.g. ~\cite{PM}) we get that $\phi^t_{g_1}$ and $\phi^t_{g_1}$ are $\tau$-close. A property in $\mathscr{F}^0(UT(M))$ is said to be $\tau$-\emph{generic} if it holds in a $\tau$-residual subset of $\mathscr{F}^0(UT(M))$. Once again, by Baire's Category theorem, a $\tau$-residual subset of $\mathscr{F}^0(UT(M))$ is $\tau$-dense in $\mathscr{F}^0(UT(M))$.

Given any continuous geodesic flow $\phi \in \mathscr{G}^0(UT(M))$, the definitions of the nonwandering and recurrent sets of $\phi^t$ can be given analogously to the ones given for a geodesic flow $\phi_g^t \in \mathscr{G}^1(UT(M))$.

Considering $C^0$-closures of geodesic flows is a well-known subject and it is quite related to Gromov-Eliasberg symplectic rigidity (see e.g. \cite{OM,O,V}). We mention a recent result on rigidity of $C^0$-geodesic flows (\cite[\S6]{MS}) which, in rough terms says that, if the sequence of metrics $g_n\in \mathcal{R}^2(M)$ converges in the $C^0$-sense to $g$, then the geodesic flows associated to $g_n$, $\phi^t_{g_n}$, converge to the geodesic flow of $g$.

Given a Riemannian metric $g$ which generates a geodesic flow $\phi^t_g$, a central question in dynamical systems is to know if the periodic orbits of $\phi^t_g$ are dense in $\Omega(\phi^t_g)$. The aim of the present paper is to prove the celebrated Pugh's general density theorem (\cite{Pu}) for continuous geodesic flows, i.e. to show that:

\begin{maintheorem}\label{Teo1}
There exists a $\tau$-residual subset $\mathscr{G}$ of $\mathscr{G}^0(UT(M))$ such that $UT(M) = \overline{Per(\phi^t)}$, for any $\phi^t \in \mathscr{G}$.
\end{maintheorem}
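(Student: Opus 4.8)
The plan is to follow the classical scheme that derives a general density theorem from two ingredients: (i) a closing lemma in the $C^0$ setting, and (ii) an abstract Baire-category argument turning a closing lemma into genericity of dense periodic orbits. Since we work in the $C^0$ topology, we have much more room than in Pugh's original $C^1$ argument: perturbations of a flow are only required to remain $C^0$-close and volume-preserving, and we do not need to control derivatives. The key realization is that $\mathscr{G}^0(UT(M))$ is, by definition, the $\tau$-closure of the smooth geodesic flows, so it suffices to produce perturbations \emph{within} $\mathscr{G}^1(UT(M))$ (i.e. by perturbing the metric $g$ in $\mathcal{R}^2(M)$), or more flexibly within $\mathscr{F}^0(UT(M))$ in a way that keeps us inside the closure.

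First I would set up the Baire machinery. For each $k \in \mathbb{N}$ fix a countable basis $\{U_i\}_{i\in\mathbb{N}}$ of $UT(M)$ and, for each pair $(i,j)$, let $\mathscr{V}_{i,j}$ be the set of $\phi^t \in \mathscr{G}^0(UT(M))$ that have a periodic orbit meeting $U_i$ and $U_j$ (or, more simply, for each $i$ let $\mathscr{V}_i$ be the flows with a periodic orbit meeting $U_i$). One checks that each $\mathscr{V}_i$ is $\tau$-open (a periodic orbit of a $C^0$ flow persists as a nearby periodic orbit under small $\tau$-perturbations — here I would use that a transverse periodic orbit, or one obtained via a fixed point of a return map, is robust, or alternatively just phrase $\mathscr{V}_i$ as a countable union of sets guaranteeing a periodic point of period $\le n$ in $U_i$, each of which is open by compactness). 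Then $\mathscr{G} := \bigcap_i \mathscr{V}_i$ is a $G_\delta$ set and, since $(\mathscr{G}^0(UT(M)),\tau)$ is Baire, $\mathscr{G}$ is residual provided each $\mathscr{V}_i$ is $\tau$-dense. Any $\phi^t\in\mathscr{G}$ then has $\overline{Per(\phi^t)}$ meeting every $U_i$, hence $\overline{Per(\phi^t)}=UT(M)$, giving Theorem~\ref{Teo1}.

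The core of the argument is therefore proving that each $\mathscr{V}_i$ is $\tau$-dense in $\mathscr{G}^0(UT(M))$: given $\phi^t\in\mathscr{G}^0(UT(M))$, an open set $U=U_i$, and $\varepsilon>0$, I must find $\psi^t\in\mathscr{G}^0(UT(M))$ that is $\tau$-$\varepsilon$-close to $\phi^t$ and has a periodic orbit through $U$. By density of $\mathscr{G}^1(UT(M))$ in $\mathscr{G}^0(UT(M))$ I may first replace $\phi^t$ by a smooth geodesic flow $\phi^t_g$ that is $\varepsilon/2$-close. Since the geodesic flow is volume-preserving, $UT(M)=\Omega(\phi^t_g)$ (stated in the excerpt), so in particular $U$ contains a nonwandering point — even better, by Poincaré recurrence, Lebesgue-a.e.\ point of $U$ is recurrent, so I may pick a recurrent point $(x,v)\in U$ whose orbit returns to $U$. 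Now I need a $C^0$ closing lemma: perturb the flow near the returning orbit segment so that the orbit of $(x,v)$ closes up. In the $C^0$ category this is soft — one can directly ``re-route'' the flow in a small flow-box, using that the Lebesgue measure can be preserved (compose the time-$t$ map with a volume-preserving homeomorphism supported in a small ball that pushes the almost-return point back onto the orbit). The subtlety is keeping the result inside $\mathscr{G}^0(UT(M))$ rather than merely in $\mathscr{F}^0(UT(M))$; here I would either (a) realize the perturbation again as a geodesic flow of a nearby $C^2$ metric — invoking a smooth local perturbation of $g$ supported where the geodesic nearly closes, in the spirit of existing closing lemmas for geodesic flows — or (b) argue that the $C^0$-perturbed flow, being a $\tau$-limit of such smooth geodesic flows, still lies in the closure $\mathscr{G}^0(UT(M))$.

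The main obstacle I anticipate is precisely this last point: the genuine closing perturbation must be performed in a way that is compatible with the constraint of lying in $\mathscr{G}^0(UT(M))$, which is not a manifold but merely a closed subset of $\mathscr{F}^0(UT(M))$. In the purely $C^0$ world one has enormous flexibility to perturb a volume-preserving flow to close an orbit, but a priori such a perturbation need not be a geodesic flow of anything. The cleanest route is to do the closing at the level of the metric: use that a geodesic segment that almost returns (with almost the same direction) can be ``joined up'' by a $C^2$-small modification of $g$ in a tubular neighborhood of the segment, producing a genuine closed geodesic, hence a periodic orbit of $\phi^t_{\tilde g}\in\mathscr{G}^1(UT(M))\subset\mathscr{G}^0(UT(M))$, and that by Gronwall's inequality (quoted in the excerpt) the $C^1$-smallness of the metric perturbation gives $\tau$-closeness of the flows. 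This reduces the whole theorem to a (smooth) closing lemma for geodesic flows together with the routine Baire argument above; if even the smooth closing lemma is unavailable in the needed generality, one falls back on option (b), verifying stability of $\mathscr{G}^0(UT(M))$ under the soft $C^0$ closing perturbation.
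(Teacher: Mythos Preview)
Your scheme has the right skeleton (closing lemma plus Baire), and your density argument for the $\mathscr{V}_i$'s is essentially correct: Rifford's $C^1$-closing lemma for metrics, together with Gronwall and the $\tau$-density of $\mathscr{G}^1(UT(M))$ in $\mathscr{G}^0(UT(M))$, does give that any smooth geodesic flow can be $\tau$-perturbed within $\mathscr{G}^0(UT(M))$ to create a periodic orbit through a prescribed open set. That is exactly what the paper uses.

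The gap is in the openness of $\mathscr{V}_i$. In the $\tau$-topology a periodic orbit of a $C^0$ flow does \emph{not} persist under small perturbations: there is no $C^0$ notion of transversality or hyperbolicity that would force a nearby return map to still have a fixed point, and ``periodic point of period $\le n$ in $\overline{U_i}$'' defines a \emph{closed} condition on flows (by compactness), not an open one. The paper explicitly quotes Rifford to this effect (``the existence of an hyperbolic periodic orbit is not stable under $C^0$ perturbations on the dynamics''). So neither of your two suggested justifications works, and $\mathscr{V}_i$ is not $\tau$-open in general.

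The paper's route around this is the substantive part of the argument and is quite different from yours. It first uses the bumpy metric theorem to get a $\tau$-dense set of geodesic flows all of whose closed orbits are nondegenerate; for such flows each closed orbit has nonzero fixed point index on a suitable transversal, and nonvanishing index \emph{is} $\tau$-stable. This is packaged into open sets $\mathcal{F}_{i,n}$ (no closed orbit of period $\le n$ through $\overline{\mathcal{B}_i}$) and $\mathcal{I}_{i,n}$ (nonzero index in some small $\mathcal{B}_j\subset \mathcal{B}_i$), whose union is shown to be $\tau$-open and dense; the resulting residual $\mathscr{G}$ consists of flows all of whose periodic orbits are \emph{permanent}. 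From this one gets that $\phi^t\mapsto\overline{Per(\phi^t)}$ is lower semicontinuous on $\mathscr{G}$, and then Fort's theorem on continuity points of semicontinuous set-valued maps, combined with the closing lemma, finishes the proof. In short: your direct open-and-dense scheme fails at ``open'', and the paper replaces it by an index-theoretic permanence argument followed by a semicontinuity/continuity-points argument.
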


We recall that Klingenberg and Takens theorem (see ~\cite{KT}) assures that for a dense subset of metrics on a compact manifold $M$ we have infinitely many closed geodesics (see also \cite{Rad} on more general generic assumptions). It is worth noting that on manifolds with negative curvature the geodesic flow is Anosov and so, by Anosov closing lemma (see e.g. \cite{KH}), the closed geodesics are dense in the manifold without any need of generic considerations.

The first attempt to obtain the dissipative version of Theorem~\ref{Teo1} in the much less knotty class of homeomorphisms was in \cite{PPSS} by Palis, Pugh, Shub and Sullivan. However, the proof in \cite{PPSS} was not complete as it was observed in ~\cite{CMN}. The complete proof is due to Hurley (\cite{H}) using dissipative arguments. Hurley's proof uses the fact that we can create a $C^0$-stable periodic \emph{sink} by a small $C^0$-perturbation and Brouwer's fixed point theorem guarantees a fixed point for every $C^0$-close homeomorphism. Clearly, this strategy is meaningless for the volume-preserving setting because sinks simply do not exist. With respect to the volume-preserving context Daalderop and Fokkink proved (see ~\cite[Proposition 4]{DF}) that the general density theorem holds for that systems. In the present paper we mainly follow the arguments in~\cite{CMN}, adapting them to the geodesic flow framework and combine it with the recent proved closing lemma (\cite{Ri}). It is interesting to remark that the general density theorem was first established in the $C^1$-class by Pugh (\cite{Pu}) in the late 1960's and much later by Pugh and Robinson for volume-preserving and symplectic diffeomorphisms as also for Hamiltonians (see~\cite{PR}). Actually, the general density  theorem is a direct consequence of the combination of the closing lemma and the stability and persistence of non-degenerated closed orbits given, for example, from the hyperbolic (and also elliptic in the conservative case) structure. This stability and persistence holds in the smooth case but sadly the notion of hyperbolicity (and ellipticity) is no longer valid in the topological context, as it is perceptively observed by Rifford in \cite{Ri}:
\emph{``The Pugh $C^1$-Closing Lemma has strong consequences on the structure of the flow of generic vector fields... It is worth noticing that our result is not striking enough to infer relevant properties for generic geodesic flows (for instance, the existence of an hyperbolic periodic orbit is not stable under $C^0$ perturbations on the dynamics)"}. Fortunately, we have at hand a range of topological techniques which will allow us to reach a generic result of undeniable interest.

\end{section}

\begin{section}{Perturbation results}

Let $\phi^t_g$ be the geodesic flow of a Riemannian metric $g\in \mathcal{R}^2(M)$ acting on $UT(M)$, the unit tangent bundle of $M$. Let $\pi:UT(M) \rightarrow M$ be the canonical projection.
Non-trivial closed geodesics on $M$ are in one-to-one correspondence
to the periodic orbits of  $\phi^t_g$. From now on we denote by $\theta_\gamma$ the representative of the closed geodesic $\gamma$ by picking a single point in the orbit, say $\theta_\gamma=(x,v)$.
 Given a closed orbit $\gamma =\{ \phi^t_g(\theta): \, t \in [0,a]\}$ of period $a$ we can define the {\em Poincar\'{e} map}
$\mathcal{P}_g(\Sigma,\gamma)$ as follows: one can choose a local $(2\dim(M)-2)$-hypersurface $\Sigma$ in $UT(M)$ containing $\theta$ and transversal to $\gamma$ such that there are open neighborhoods
$\Sigma_0$ and $\Sigma_a$ of $\theta$ in $\Sigma$ and a differentiable arrival function $\delta: \Sigma_0 \rightarrow \R$
with $\delta(\theta)=a$
such that the map $\mathcal{P}_g(\Sigma,\gamma): \Sigma_0 \rightarrow \Sigma_a$ given by $v \mapsto \phi_g^{\delta(v)}(v)$ is a diffeomorphism.

Given a closed geodesic $c:\R/\Z \rightarrow M$, all iterates $c^m: \R / \Z \rightarrow M$; $c^m(t)=c(mt)$ for a positive integer $m$ are closed geodesics too.

A closed orbit $\gamma$ (or the corresponding closed geodesic $c$) is called {\em nondegenerate} (c.f.~\cite{CP}) if $1$ is not an eigenvalue of the linearized Poincar\'{e}
map $P_c:=D_{\gamma(0)}\mathcal{P}_g(\Sigma,\gamma)$. In that case, $\gamma$ is an isolated closed orbit and $\pi \circ \gamma$ is an isolated closed geodesic.

A Riemannian metric $g$ is called {\em bumpy} if all the closed orbits of the geodesic flow are nondegenerate.
Since $P_{c^m}=P^m_c$ this is equivalent to saying that if $\mbox{\rm exp}(2 \pi i \lambda)$ is an eigenvalue of $P_c$, then $\lambda$ is irrational.
We state the bumpy metric theorem~(\cite{Ab,An}):

\begin{theorem}\label{bumpy}(Bumpy metric theorem)
For $2 \leq k \leq \infty$, the set of bumpy metrics of class $C^k$ is a residual subset of $\mathcal{R}^k(M)$.
\end{theorem}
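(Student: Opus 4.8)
The plan is to derive the theorem from an infinite-dimensional transversality (Sard--Smale) argument applied to the ``universal'' space of all closed geodesics of all metrics, the real content being a single controllability-type perturbation lemma. First I would reduce to a countable family of open--dense conditions: a closed geodesic $c$ is nondegenerate in the bumpy sense exactly when every iterate $c^m$ ($m\geq1$) is nondegenerate, i.e.\ no eigenvalue of $P_c$ is a root of unity, and since $c^m$ has length $m\cdot\mathrm{len}(c)$ it suffices to show that for every $T>0$ the set
$$\mathcal{G}_T=\{g\in\mathcal{R}^k(M):\ \text{every closed geodesic of }\phi^t_g\text{ of length}\leq T\text{ is nondegenerate}\}$$
is open and dense in $\mathcal{R}^k(M)$; the bumpy metrics then form the residual set $\bigcap_{n\in\N}\mathcal{G}_n$. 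The case $k=\infty$ follows by the same scheme inside the Baire space $\mathcal{R}^\infty(M)$, the perturbations produced below being smooth.

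\emph{Openness of $\mathcal{G}_T$.} If $g\in\mathcal{G}_T$, then by Arzel\`a--Ascoli the closed geodesics of $\phi^t_g$ of length $\leq T$ form a $C^1$-compact set, and being nondegenerate each is isolated; hence there are only finitely many, each persisting $C^1$-continuously and staying nondegenerate under $C^2$-small perturbations of $g$, by the implicit function theorem applied to $\phi^a_g(\xi)-\xi=0$. A further compactness argument shows no new short closed geodesics appear for nearby metrics, since a $C^1$-limit of short closed geodesics of $g_i\to g$ is a closed geodesic of $g$ whose length is bounded below by a uniform constant on the compact $M$. Hence $\mathcal{G}_T$ is open, up to routine care at length exactly $T$, dealt with by slightly enlarging $T$.

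\emph{Density of $\mathcal{G}_T$.} Put $\mathcal{M}=\mathcal{R}^k(M)$, fix a finite atlas, and consider the section of the associated Banach bundle with fibre $\R^{2\dim M-1}$ given in the local trivializations by
$$s\colon\ \mathcal{M}\times UT(M)\times(0,T)\longrightarrow \R^{2\dim M-1},\qquad s(g,\xi,a)=\phi^a_g(\xi)-\xi;$$
its zero set $\mathcal{Z}$, modulo the $\R$-action reparametrizing the base point on an orbit, is exactly the space of closed orbits of length $<T$ over all metrics. The key point is that $s$ is transversal to $0$: its derivative in $(\xi,a)$ has image $\mathrm{Im}(D\phi^a_g-I)+\R\,X(\xi)$ (with $X$ the geodesic vector field), which fails to fill $\R^{2\dim M-1}$ precisely when $c$ is degenerate, while the derivative in $g$ --- using perturbations supported in an arbitrarily thin tube about a non-self-intersecting arc of the orbit --- supplies the missing directions. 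Granting this, $\mathcal{Z}$ is a separable Banach submanifold and the projection $\pi\colon\mathcal{Z}\to\mathcal{M}$ is Fredholm of index $1$ (the nominal fibre dimension being $(2\dim M-1)+1-(2\dim M-1)=1$, namely the flow direction). By Sard--Smale the regular values of $\pi$ are residual in $\mathcal{M}$, and unwinding the definitions $g$ is a regular value if and only if $\partial_{(\xi,a)}s$ is onto at every closed orbit, i.e.\ $1\notin\mathrm{spec}(P_c)$; hence $\mathcal{G}_T$ is dense. For $k=2$ the loss of one derivative in $g\mapsto\phi^t_g$ makes this Fredholm count too weak, and one must instead carry out, as in Anosov's argument, a finite-dimensional reduction along each orbit, where only finitely many jets of $g$ enter and ordinary Thom transversality suffices.

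\emph{The main obstacle.} The transversality input above is the heart of the matter. In Fermi coordinates along an arc of $c$, the linearized geodesic flow is the solution operator of the Jacobi equation $\ddot J+R(J,\dot c)\dot c=0$, in which the curvature term depends on the transverse $2$-jet of the metric and can therefore be prescribed almost arbitrarily along the arc while keeping $c$ a geodesic; the required statement is that this linear system, with the curvature in the role of a control, is controllable over an arbitrarily short time --- equivalently, that one can realize to first order the composition of $P_c$ with an essentially arbitrary small symplectic map. This is exactly the kind of geometric control estimate that underlies the closing lemma for geodesic flows, and it has both consequences we need: the derivative $\partial_g s$ becomes onto, so $s$ is transversal to $0$, and, applied directly, a generic small metric perturbation moves $P_c$ off the proper subvariety $\{\det(\,\cdot\,-I)=0\}\subset\mathrm{Sp}(2\dim M-2,\R)$ (proper since, e.g., $-I$ lies outside it), thereby removing the degeneracy.
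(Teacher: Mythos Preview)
The paper does not prove this statement at all: Theorem~\ref{bumpy} is simply quoted as the classical bumpy metric theorem, with references to Abraham~\cite{Ab} and Anosov~\cite{An}, and is then used as a black box in the proof of Lemma~\ref{perm}. There is therefore nothing in the paper to compare your argument against.

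That said, your sketch is broadly the standard Abraham--Anosov line: reduce to showing each $\mathcal{G}_T$ is open and dense, get openness from compactness of short closed geodesics plus the implicit function theorem, and get density from a Sard--Smale transversality argument whose essential input is that metric perturbations supported near a simple arc of the orbit can move the linearized Poincar\'e map in an arbitrary direction of $\mathfrak{sp}(2\dim M-2,\R)$. Two points would need tightening before this could stand as a proof. First, the map $s(g,\xi,a)=\phi^a_g(\xi)-\xi$ is not literally well defined since $UT(M)$ is not a vector space; one must work in local charts or with the exponential map, and then the Fredholm bookkeeping (index $1$ modulo the flow direction) has to be redone carefully. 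Second, the controllability claim for the Jacobi equation is the entire content of the theorem and cannot simply be asserted; Anosov's proof in particular handles the genuine difficulty of orbits with self-intersections on $M$, where a perturbation supported near one arc affects the Jacobi equation along several arcs simultaneously, and your sketch sidesteps this by restricting to ``a non-self-intersecting arc'' without justifying that such an arc always exists or suffices.
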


Recently, in~\cite{Ri}, Rifford was able to overcome a problem that was open for a long time and showed how to close an orbit of the geodesic flow by a small perturbation of the metric in the $C^1$ topology:

\begin{theorem}\label{closing}($C^1$-closing lemma)
Let $g$ be a Riemannian metric on $M$ of class $C^k$ with $k \geq 3$ (resp. $k=\infty$), $(x,v) \in UT(M)$ and $\epsilon >0$ be fixed. Then there exists a metric $\tilde{g}$ of class $C^{k-1}$ (resp. $C^{\infty}$) with $\nrm{\tilde{g} - g}_{C^1} < \epsilon$ such that the geodesic $\gamma_{(x,v)}^{\tilde{g}}$ is periodic. 
\end{theorem}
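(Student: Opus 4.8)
The plan is to run a geometric closing-lemma argument of Pugh--Robinson type, but carried out inside the rigid family of Hamiltonians arising from Riemannian metrics; this is exactly what is done in \cite{Ri}.

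First I would move the problem to $T^*M$: the geodesic flow $\phi^t_g$ is the Hamiltonian flow of $H_g(x,p)=\frac{1}{2}(\|p\|_x^*)^2$, restricted to the energy level $\{H_g=\frac{1}{2}\}$, which $g$ identifies with $UT(M)$; a $C^1$-small perturbation $\tilde g$ of $g$ induces a $C^1$-small perturbation $H_{\tilde g}$ of $H_g$, but one through Hamiltonians that remain fibrewise positive-definite quadratic forms in $p$. This rigidity is precisely why the classical Pugh--Robinson closing lemma, which superimposes an arbitrary localized bump on $H$, does not apply off the shelf: the closing perturbation has to be manufactured within this constrained class. I would also reduce to the case in which $(x,v)$ is recurrent for $\phi^t_g$; if the $g$-orbit of $(x,v)$ leaves a neighbourhood of $(x,v)$ and never comes back, no $C^1$-small change of metric can make it periodic, and the recurrent case is anyway the only one used later. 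Given recurrence, I fix for an arbitrarily small $\delta>0$ an arbitrarily large time $T=T(\delta)$ with $d_{TM}\bigl(\phi^T_g(x,v),(x,v)\bigr)<\delta$, so that $\gamma^g_{x,v}|_{[0,T]}$ is a long geodesic segment whose final point and velocity are $\delta$-close to $(x,v)$.

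The core step is a quantitative steering lemma for metric perturbations. Take a short sub-segment of $\gamma^g_{x,v}$, of length $\ell$, sitting inside a tube $U$ of radius $r$ that is otherwise disjoint from the rest of $\gamma^g_{x,v}|_{[0,T]}$; I claim one can modify $g$ only inside $U$, by a perturbation of $C^1$-size $<\epsilon$, so as to prescribe --- within a small ``box'' whose dimensions are controlled by $\epsilon$, $r$ and $\ell$ --- the exit point and exit velocity of the perturbed geodesic that enters $U$ along $\gamma^g_{x,v}$. The mechanism is: write the variation of the geodesic as the solution of the Jacobi (linearized geodesic) equation forced by the variation of the metric and of its first derivatives; treat the metric perturbation inside $U$ as the control parameter; and prove local controllability of the resulting endpoint map, i.e.\ a ball--box estimate showing that the reachable set of position/velocity displacements contains a box whose size in the transverse directions is bounded below by a definite positive power of $\epsilon$ and $r$. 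Choosing $\delta$ small relative to that box, I steer the perturbed geodesic so that after the passage through $U$ and the remaining time up to $T$ it returns exactly to $(x,v)$ with velocity $v$; hence $\gamma^{\tilde g}_{x,v}$ is periodic, and $\tilde g$ stays within $C^1$-distance $\epsilon$ of $g$.

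The main obstacle --- as in every closing lemma --- is that the orbit of $(x,v)$ may cross the support $U$ of the perturbation many times before time $T$, and each extra crossing can destroy the gain obtained at the chosen one. To handle this I would not use a single tube but spread the perturbation over a long chain of very thin boxes strung along $\gamma^g_{x,v}$ near the almost-return, and invoke a lemma of Mai type (as in Pugh's original argument) to keep the destructive crossings under control; this forces a delicate, interlocked choice of $\delta$, of the box dimensions $r$ and $\ell$, of the number of boxes, and of the return time $T$. That bookkeeping, together with the quantitative controllability estimate, is the genuine hard core of \cite{Ri}; the Hamiltonian reformulation, the smooth matching of $\tilde g$ to $g$ across $\partial U$ (which also keeps the perturbed form positive definite), the normalization of the geodesic speed, and the mild loss of a derivative (class $C^{k-1}$) are routine by comparison.
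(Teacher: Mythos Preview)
The paper does not prove this theorem at all: it is quoted verbatim as Rifford's $C^1$-closing lemma and attributed to \cite{Ri}, with no argument given. So there is no ``paper's own proof'' to compare your proposal against. Your sketch is a reasonable high-level outline of the strategy in \cite{Ri} --- recast the geodesic flow as a constrained Hamiltonian, use recurrence to find an almost-return, and close the orbit via a quantitative controllability/steering estimate for metric perturbations while managing the multiple-crossing problem \`a la Pugh --- and that is indeed the shape of Rifford's argument. For the purposes of this paper, however, the correct ``proof'' is simply the citation; if you want to include more, a one-line pointer to \cite{Ri} is all that is warranted, and the detailed bookkeeping you allude to (Mai-type selection, ball--box estimates, loss of one derivative) should be checked directly against Rifford's paper rather than reconstructed here.
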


\end{section}

\begin{section}{Proof of Theorem~\ref{Teo1}}

Let $\phi^t \in \mathscr{G}^0(UT(M))$. 
We define the set of \emph{weak-periodic points of $\phi^t$} by

$$WPer(\phi^t):=  \{(x,v)=\underset{n\rightarrow\infty}{\lim} (x_n,v_n) \colon 
 (x_n,v_n)\in Per(\phi_{n}^t), \, \phi_{n}^t \in \mathscr{G}^0(UT(M))\text{ and }
\phi_n^t \underset{\tau}{\to} \phi^t \}$$

We observe that, thanks to the Poincar\'{e} recurrence theorem, the geodesic flow $\phi^t$ is nowandering on $UT(M)$.
Let $\mathscr{G}^2(UT(M))$ be the set of geodesic flows associated to metrics in $\mathcal{R}^3(M)$. Clearly,
$\mathscr{G}^2(UT(M))$ is $\tau$-dense in $\mathscr{G}^0(UT(M))$. 
Therefore, 
it follows from the $C^1$-closing lemma (Theorem~\ref{closing}), and via Gronwall's inequality, that:

\begin{lemma}\label{weak}
If $\phi^t \in \mathscr{G}^0(UT(M))$, then $UT(M) = WPer(\phi^t)$.
\end{lemma}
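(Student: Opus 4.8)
The plan is to establish the non-trivial inclusion $UT(M)\subseteq WPer(\phi^t)$, the reverse one being immediate since $WPer(\phi^t)$ is by construction a set of limits of points of the compact space $UT(M)$. So fix $(x,v)\in UT(M)$; it suffices to produce, for every $\epsilon>0$, a flow $\psi^t\in\mathscr{G}^0(UT(M))$ that is $\tau$-$\epsilon$-close to $\phi^t$ and has $(x,v)$ as a periodic point. Taking $\epsilon=1/n\to 0$ then exhibits $(x,v)$ as the (constant) limit of periodic points of flows $\psi^t_n\underset{\tau}{\to}\phi^t$, that is, $(x,v)\in WPer(\phi^t)$.

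First I would invoke the $\tau$-density of $\mathscr{G}^2(UT(M))$ in $\mathscr{G}^0(UT(M))$, already recorded above, to pick a metric $g\in\mathcal{R}^3(M)$ whose geodesic flow $\phi^t_g$ is $\tau$-$(\epsilon/2)$-close to $\phi^t$, and then fix a parameter $\delta>0$ (depending on $g$ and $\epsilon$) small enough that, by Gronwall's inequality — precisely the mechanism quoted in the Introduction for passing from $C^1$-closeness of metrics to $\tau$-closeness of geodesic flows — any metric $C^1$-within $\delta$ of $g$ has geodesic flow $\tau$-$(\epsilon/2)$-close to $\phi^t_g$, hence $\tau$-$\epsilon$-close to $\phi^t$. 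Set $\hat v=v/\sqrt{g_x(v,v)}$, so $(x,\hat v)\in UT_g(M)$ represents the same direction at $x$ as $(x,v)$; under the standard radial identification of the various unit tangent bundles this point of $UT_g(M)$ corresponds to $(x,v)$.

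Next, apply the $C^1$-closing lemma (Theorem~\ref{closing}) to $g$, to the point $(x,\hat v)$, and to this $\delta$: there is a metric $\tilde g$ of class $C^2$ with $\nrm{\tilde g - g}_{C^1}<\delta$ such that $\gamma^{\tilde g}_{(x,\hat v)}$ is a periodic geodesic. Rescaling to unit $\tilde g$-speed, its velocity at $x$ becomes $\hat v/\sqrt{\tilde g_x(\hat v,\hat v)}=v/\sqrt{\tilde g_x(v,v)}$, i.e.\ the representative of the direction $[v]$ in $UT_{\tilde g}(M)$; thus, viewed in $UT(M)$, the point $(x,v)$ itself is a periodic point of $\phi^t_{\tilde g}\in\mathscr{G}^1(UT(M))\subseteq\mathscr{G}^0(UT(M))$, a flow $\tau$-$\epsilon$-close to $\phi^t$ by the choice of $\delta$. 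Setting $\psi^t=\phi^t_{\tilde g}$ completes the step, and letting $\epsilon\to0$ gives $(x,v)\in WPer(\phi^t)$.

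The argument is therefore a concatenation of three approximations, and I do not expect a serious obstacle beyond two bookkeeping points that must be handled carefully: first, the two topologies in play — the $C^1$-topology on metrics and the compact-open topology $\tau$ on flows — are linked only through Gronwall's inequality, so every estimate must ultimately be expressed in terms of $\nrm{\cdot}_{C^1}$ of the metrics; second, the closing lemma drops regularity from $C^3$ to $C^2$, which is exactly why the initial approximation must be taken inside $\mathscr{G}^2(UT(M))$ and not merely inside $\mathscr{G}^1(UT(M))$. The passage through the radial identifications of the unit tangent bundles is harmless because those identifications move each $v$ only along its own ray, hence fix the corresponding point of the abstract $UT(M)$; in particular the closed geodesic produced by Theorem~\ref{closing} genuinely runs through $(x,v)$.
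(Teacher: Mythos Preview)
Your argument is correct and follows essentially the same route as the paper: approximate $\phi^t$ in $\tau$ by a geodesic flow of a $C^3$ metric, apply Rifford's $C^1$-closing lemma, and use Gronwall's inequality to convert $C^1$-closeness of metrics into $\tau$-closeness of flows. The paper's own proof is in fact just the one-line remark preceding the lemma, so your version is a more detailed unpacking of the same idea; in particular your explicit treatment of the radial identification between the unit tangent bundles $UT_g(M)$ and $UT_{\tilde g}(M)$ is a point the paper leaves implicit.
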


We say that a closed orbit $\gamma$ of $\phi^t$ is \emph{permanent} if any ${\tilde{\phi}}^t \in \mathscr{G}^0(UT(M))$  and $\tau$-arbitrarily close to $\phi^t$ has a 
${\tilde{\phi}}^t$-periodic orbit $\tilde{\gamma}$ near $\gamma$. Let $\mathscr{P}(\phi^t)$ denote the set of all permanent closed orbits of $\phi^t$.

\begin{lemma}\label{perm}
There exists a $\tau$-residual subset $\mathscr{G}$ of $\mathscr{G}^0(UT(M))$ such that $Per(\phi^t)=\mathscr{P}(\phi^t)$,
for any $\phi^t\in \mathscr{G}$.
\end{lemma}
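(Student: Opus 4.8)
The idea is the standard Pugh/Robinson "general density" mechanism: permanent orbits vary upper-semicontinuously, so the map $\phi^t \mapsto \mathscr{P}(\phi^t)$ (viewed as a closed subset of the compact metric space $UT(M)$) has a residual set of continuity points, and at every continuity point one shows $\overline{Per(\phi^t)}=\overline{\mathscr{P}(\phi^t)}$, whence $Per=\mathscr{P}$.

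First I would fix a countable basis $\{V_j\}_{j\in\N}$ of open sets of $UT(M)$. For each $j$, let $\mathscr{G}_j$ be the set of $\phi^t\in\mathscr{G}^0(UT(M))$ such that \emph{either} $\phi^t$ has a permanent closed orbit meeting $V_j$, \emph{or} there is a $\tau$-neighbourhood $\mathscr{U}$ of $\phi^t$ such that no $\psi^t\in\mathscr{U}$ has a periodic orbit meeting $V_j$. The first alternative is $\tau$-open by the very definition of \emph{permanent} (a permanent orbit persists, and persists meeting $V_j$ if it met $V_j$ strictly inside). The second alternative is $\tau$-open trivially. Hence each $\mathscr{G}_j$ is $\tau$-open.

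Next I would show each $\mathscr{G}_j$ is $\tau$-dense. Take any $\phi^t\notin\mathscr{G}_j$ being violated by the second alternative: then in every $\tau$-neighbourhood of $\phi^t$ there is some $\psi^t$ with a periodic orbit $\gamma$ meeting $V_j$. Now I apply the smooth perturbation tools: approximating $\psi^t$ by a geodesic flow $\phi^t_g$ with $g\in\mathcal{R}^3(M)$ (these are $\tau$-dense), then using the $C^1$-closing lemma (Theorem \ref{closing}) to produce, $C^1$-close and hence $\tau$-close, a metric whose geodesic flow has a closed orbit through (a point near) $V_j$, and finally using the bumpy metric theorem (Theorem \ref{bumpy}) to perturb further so that this closed orbit is \emph{nondegenerate}. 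A nondegenerate closed orbit is isolated and, by the implicit function theorem applied to the Poincar\'e return map, persists under $C^1$-small perturbations of the metric; more to the point, it persists under all $\tau$-small perturbations \emph{within} $\mathscr{G}^0(UT(M))$ — this is exactly where one must check that $\tau$-nearness controls the return map enough to keep a fixed point, so the perturbed orbit is \emph{permanent} and still meets $V_j$. This puts the perturbed flow into $\mathscr{G}_j$ via the first alternative, proving density. Then $\mathscr{G}:=\bigcap_j\mathscr{G}_j$ is $\tau$-residual.

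Finally I would verify that $\phi^t\in\mathscr{G}$ implies $Per(\phi^t)=\mathscr{P}(\phi^t)$. The inclusion $\mathscr{P}(\phi^t)\subset Per(\phi^t)$ is immediate (taking $\tilde\phi^t=\phi^t$). For the reverse, let $\gamma$ be any closed orbit of $\phi^t$ and pick a basic open set $V_j$ with $\overline{V_j}$ small around a point of $\gamma$; since $\phi^t$ itself has a periodic orbit meeting $V_j$, the second alternative fails for index $j$, so $\phi^t\in\mathscr{G}_j$ via the first alternative, i.e. $\phi^t$ has a permanent closed orbit meeting $V_j$. Letting $V_j$ shrink and invoking that permanent orbits form a closed set (an easy diagonal argument from the definition), one concludes $\gamma\in\mathscr{P}(\phi^t)$.

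\textbf{Main obstacle.} The delicate point is the persistence step: in the $C^1$ world a nondegenerate closed geodesic persists because of the implicit function theorem on the smooth Poincar\'e map, but here the competing flows $\tilde\phi^t$ are only $\tau$-close continuous flows, with no derivatives available. One must argue that $\tau$-closeness of flows forces $C^0$-closeness of the (continuous) first-return maps to a transversal, and then run a topological fixed-point/degree argument — the return map of $\phi^t$ near a nondegenerate $\gamma$ is a local homeomorphism with an isolated fixed point of nonzero index, and a $C^0$-small perturbation of it retains a fixed point. Getting the transversal, the return map, and the index argument to work uniformly for all nearby continuous flows — including checking the return map is well defined on competing flows, which is where continuity of the flow and compactness of $UT(M)$ enter — is the technical heart of the proof.
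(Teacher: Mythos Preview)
Your overall architecture --- countable basis, a dichotomy into two $\tau$-open alternatives, bumpy metrics for density, and a fixed-point index argument for persistence --- is the same as the paper's. But there is a genuine gap in your openness claim for the first alternative.

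You assert that ``$\phi^t$ has a permanent closed orbit meeting $V_j$'' is $\tau$-open, because ``a permanent orbit persists''. It is true that a permanent orbit $\gamma$ of $\phi^t$ persists as a \emph{periodic} orbit $\tilde\gamma$ for any nearby $\tilde\phi^t$; but nothing in the definition of permanence guarantees that $\tilde\gamma$ is itself \emph{permanent} for $\tilde\phi^t$. Concretely: permanence of $\gamma$ gives, for each $\epsilon>0$, a neighbourhood $\mathscr{U}_\epsilon$ of $\phi^t$ in which every flow has a periodic orbit $\epsilon$-close to $\gamma$. For $\tilde\phi^t\in\mathscr{U}_{\epsilon_0}$ with continuation $\tilde\gamma$, you would need, for \emph{every} $\epsilon'>0$, a neighbourhood of $\tilde\phi^t$ in which all flows have an orbit $\epsilon'$-close to $\tilde\gamma$; but for $\epsilon'\ll\epsilon_0$ the neighbourhood $\mathscr{U}_{\epsilon'/2}$ need not contain $\tilde\phi^t$. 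So the set you call $\mathscr{G}_j$ is not shown to be open, and the whole scheme collapses.

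The paper repairs exactly this point by replacing ``has a permanent orbit in $\mathcal B_i$'' with an explicit \emph{index} condition: $\phi^t\in\mathcal{I}_{i,n}$ if there is a smaller basic set $\mathcal B_j$ and a return time $t<n$ with $\phi^t$ fixed-point-free on $\partial\mathcal B_j$ and $\mathrm{ind}(\phi^t,\mathcal B_j)\neq 0$. This is manifestly $\tau$-open (the index is locally constant under $C^0$ perturbation of the map), and it \emph{implies} the existence of a periodic orbit in $\mathcal B_j$ which is automatically permanent, since the nonzero-index condition itself persists. The companion alternative $\mathcal F_{i,n}$ is ``no closed orbit of period $\le n$ through $\overline{\mathcal B_i}$'', which is open precisely because of the period bound $n$; this double indexing by $(i,n)$ is what replaces your ``no nearby flow has a periodic orbit in $V_j$'' clause. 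Density of $\mathcal F_{i,n}\cup\mathcal I_{i,n}$ then comes from the bumpy metric theorem (nondegenerate closed geodesics are isolated and have nonzero index) together with the open-dense set of metrics having no closed geodesic of period exactly $n$ --- the closing lemma is not needed at this stage. Your ``Main obstacle'' paragraph already contains the right fixed-point-index idea; the fix is to promote it from a tool in the density step to the very \emph{definition} of the open alternative.
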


\begin{proof} Along this proof we borrow some argument developed in ~\cite{CMN} together with some elementary fixed point index theory.

It follows from the bumpy metric theorem (Theorem~\ref{bumpy}) that there exists a $C^2$-residual subset $\mathscr{R}_0$ of $\mathcal{R}^2(M)$, hence $\mathscr{R}_0$ is $C^1$-dense in $\mathcal{R}^2(M)$, such that
every metric $g$ in $\mathscr{R}_0$ has all the closed orbits of the geodesic flow nondegenerate. As a consequence of Gronwall's inequality, there exists a $\tau$-dense subset $\mathscr{G}_0$ of $\mathscr{G}^1(UT(M))$ such that every $\phi_g^t$ in $\mathscr{G}_0$ has all the closed orbits nondegenerate. Since 
$\mathscr{G}^1(UT(M))$ is $\tau$-dense in $\mathscr{G}^0(UT(M))$ we get that $\mathscr{G}_0$ is $\tau$-dense is $\mathscr{G}^0(UT(M))$.

We claim that there exists a $\tau$-residual $\mathscr{G}$ such that any element in it has all the closed orbits permanent. For that we take a countable base for the topology $\{\mathcal{B}_i\}_{i\in\mathbb{N}}$ of the 
unit tangent bundle $UT(M)$. 
The fixed point index will play a crucial role along  the proof since, in rough terms the existence of non-null index on a set assures a fixed point (periodic orbit) in that set and, moreover,  displaying non-null index persists under topological $\tau$-perturbations.

Now we define, for every $i,n\in\mathbb{N}$,  the following $\tau$-open subsets of $\mathscr{G}^0(UT(M))$, which a priori do not cover the whole set $\mathscr{G}^0(UT(M))$, in the following way:
\begin{enumerate}
\item 
$\phi^t\in \mathcal{F}_{i,n}$ if $\phi^t$ is \emph{free} of closed orbits of period less or equal than $n$ in $\overline{\mathcal{B}_i}$;
\item $\phi^t\in \mathcal{I}_{i,n}$ if there exists $\mathcal{B}_j$ with $\text{diam}(\mathcal{B}_j)<\text{diam}(\mathcal{B}_i)$ such that for some return time $t<n$ we have $\phi^t(x,v)\not= (x,v)$ for any $(x,v)$ in the boundary of $\mathcal{B}_j$ and $ind(\phi^t,\mathcal{B}_j)\not=0$.
\end{enumerate}

Let $\mathscr{O}^{\mathcal{R}}_n$ be the $C^1$-open and dense subset of metrics in $\mathcal{R}^2(M)$ without any closed geodesic of period exactly equal to $n$, and let $\mathscr{O}_n$ be the set of geodesic flows associated with metrics in $\mathscr{O}^{\mathcal{R}}_n$.

We claim that any flow $\phi^t_g\in\mathscr{G}_0\cap \mathscr{O}_n$ is such that $\phi^t_g\in \mathcal{F}_{i,n}\cup \mathcal{I}_{i,n}$ for all $i,n$. In fact, if $\phi^t_g$ has no closed orbit  with period $\leq n$ through $\mathcal{B}_i$, then $\phi^t_g\in \mathcal{F}_{i,n}$, otherwise $\phi^t_g$ has a closed orbit $\gamma$ with period $a<n$ through $\mathcal{B}_i$. But, since $\phi^t_g\in\mathscr{G}_0$, $\gamma$ must be isolated from other closed orbits with period less than $a$. Therefore, we accomplish (2) and $\phi^t_g\in \mathcal{I}_{i,n}$. 

We let the residual on the theorem to be defined by
$$\mathscr{G}:=\bigcap_{i,n} (\mathcal{F}_{i,n}\cup \mathcal{I}_{i,n}).$$

Let us see why it works; take $\phi^t\in\mathscr{G}$ and $\gamma\in Per(\phi^t)$ of period $a$, then for any $\mathcal{B}_i$ which intersects $\gamma$ clearly $\phi^t\in \mathcal{I}_{i,n}$ for all $n\geq a$.  As a consequence, there exists $\mathcal{B}_j$ with $\text{diam}(\mathcal{B}_j)<\text{diam}(\mathcal{B}_i)$ and such that $ind(\phi^a,\mathcal{B}_j)\not=0$ and moreover this property is persistent for small $\tau$-perturbations on the original flow. In conclusion $\gamma$ is permanent.

\end{proof}

Let $UT(M)^\star$ be the set of compact subsets of $UT(M)$ endowed with the Hausdorff topology.

\begin{lemma}\label{lsc}
The map $\mathfrak{P}\colon \mathscr{G}^0(UT(M))\rightarrow UT(M)^\star$, where $\mathscr{G}^0(UT(M))$ is endowed with the $\tau$-topology and $UT(M)^\star$ is endowed with the Hausdorff topology, and defined by $\mathfrak{P}(\phi^t)=\overline{Per(\phi^t)}$ is lower semicontinuous on the residual $\mathscr{G}$ given by Lemma~\ref{perm}.
\end{lemma}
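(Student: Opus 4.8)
The goal is to show that if $\phi^t \in \mathscr{G}$ and $\phi^t_n \xrightarrow{\tau} \phi^t$, then $\liminf_n \mathfrak{P}(\phi^t_n) \supseteq \mathfrak{P}(\phi^t)$ in the Hausdorff sense, i.e. every point of $\overline{Per(\phi^t)}$ is a limit of points lying in $\overline{Per(\phi^t_n)}$. It suffices to handle a point $(x,v) \in Per(\phi^t)$ itself, since limits of such points (the full closure) then follow by a standard diagonal argument; and in fact, since the closures are involved, it is enough to approximate points on the orbit $\gamma$ through $(x,v)$. Fix such a periodic orbit $\gamma$ of period $a$. Because $\phi^t \in \mathscr{G}$, Lemma~\ref{perm} tells us that $\gamma \in \mathscr{P}(\phi^t)$, i.e. $\gamma$ is \emph{permanent}: every $\tilde\phi^t \in \mathscr{G}^0(UT(M))$ that is $\tau$-close to $\phi^t$ has a periodic orbit $\tilde\gamma$ near $\gamma$.

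Making this quantitative is the heart of the argument. I would unwind the permanence statement through the definition of the sets $\mathcal{I}_{i,n}$ used in the proof of Lemma~\ref{perm}: for every basic open set $\mathcal{B}_i$ meeting $\gamma$ there is a smaller $\mathcal{B}_j$ with $\text{ind}(\phi^a, \mathcal{B}_j) \neq 0$ and no fixed points of the return map on $\partial \mathcal{B}_j$, and this configuration (nonzero fixed point index of the return map on $\mathcal{B}_j$, together with fixed-point-freeness on the boundary) is stable under small $\tau$-perturbations — this is exactly the homotopy invariance and excision properties of the fixed point index applied to the $\tau$-continuous family of return maps. Concretely: given $\varepsilon > 0$, choose $\mathcal{B}_i$ with $\gamma \cap \mathcal{B}_i \neq \emptyset$ and $\text{diam}(\mathcal{B}_i) < \varepsilon$; obtain the associated $\mathcal{B}_j \subset \mathcal{B}_i$ from the membership $\phi^t \in \mathcal{I}_{i,n}$ ($n \geq a$); then there is a $\tau$-neighborhood $\mathcal{V}$ of $\phi^t$ such that every $\psi^t \in \mathcal{V}$ still has $\psi^a(x,v) \neq (x,v)$ on $\partial \mathcal{B}_j$ and $\text{ind}(\psi^a, \mathcal{B}_j) \neq 0$, hence a fixed point of $\psi^a$ inside $\mathcal{B}_j$, i.e. a $\psi^t$-periodic orbit meeting $\mathcal{B}_j \subset \mathcal{B}_i$, within $\varepsilon$ of $\gamma$. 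For all large $n$, $\phi^t_n \in \mathcal{V}$, so $\overline{Per(\phi^t_n)}$ meets the $\varepsilon$-ball about $(x,v)$. Letting $\varepsilon \to 0$ and diagonalizing over a countable dense subset of $\overline{Per(\phi^t)}$ gives the required one-sided Hausdorff convergence.

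The main obstacle is the perturbation stability of the fixed point index in the purely $C^0$ setting, where the return map need not even be well-defined a priori for nearby flows on the same section: one has to argue that for $\tau$-close flows the first-return time to a suitable transversal-like region varies continuously enough that the return maps $\psi^a$ (or the appropriate Poincaré-type maps) form a homotopy through maps without fixed points on $\partial \mathcal{B}_j$, so that the index is literally constant. I would handle this by working with the time-$a$ map composed with a projection, or directly with $\phi^a$ on a flow box, exploiting that $\gamma$ has positive period bounded away from $0$ so the relevant return structure is uniform; the compactness of $\overline{\mathcal{B}_j}$ and the compact-open nature of $\tau$ then give the uniform control needed to invoke homotopy invariance of the index. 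A secondary, routine point is checking that lower semicontinuity need only be verified at points of $\mathscr{G}$ along arbitrary $\tau$-convergent sequences, which is immediate from the metrizability of $(\mathscr{G}^0(UT(M)),\tau)$ on the relevant restrictions and of the Hausdorff topology on $UT(M)^\star$.
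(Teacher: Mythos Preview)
Your argument is correct and follows the same core idea as the paper: on $\mathscr{G}$ every periodic orbit is permanent, and permanence precisely says that nearby flows have nearby periodic orbits, which is exactly lower semicontinuity of $\mathfrak{P}$.

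The only difference is one of packaging. The paper's proof is two sentences: it invokes the conclusion of Lemma~\ref{perm} (namely $Per(\tilde\phi^t)=\mathscr{P}(\tilde\phi^t)$) and then appeals directly to the \emph{definition} of a permanent closed orbit, treating it as a black box. You instead reopen that box and re-derive permanence through the $\mathcal{I}_{i,n}$ sets and the homotopy invariance of the fixed point index, and you also spell out the diagonalization over a dense set of periodic points. None of this is wrong, and your discussion of the index stability in the $C^0$ setting is a legitimate concern that the paper leaves implicit; but for the purposes of \emph{this} lemma it is redundant, since that stability is precisely what was established (or at least asserted) inside the proof of Lemma~\ref{perm} and is already encoded in the word ``permanent''. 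In short: same approach, but you are reproving part of Lemma~\ref{perm} where the paper simply quotes its statement.
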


\begin{proof}
We must prove that for any $\tilde{\phi}^t\in\mathscr{G}$, and any $\epsilon>0$ there exists a neighborhood $V$ of $\tilde{\phi}^t$ such that $\mathfrak{P}(\tilde{\phi}^t) \subseteq \mathcal{B}_{\epsilon}(\mathfrak{P}(\phi^t))$ for all $\phi^t\in V$, or in other words there are no implosions of the number of closed orbits when we perturb $\tilde{\phi}^t$. But Lemma~\ref{perm} says that $Per(\tilde{\phi}^t)=\mathscr{P}(\tilde{\phi}^t)$ and the proof is completed by recalling the definition of permanent closed orbit.
\end{proof}

\medskip

\begin{proof}(of Theorem~\ref{Teo1})
Noting that $UT(M)=\Omega(\phi^t)$ we will prove that $\Omega(\phi^t)=\overline{Per(\phi^t)}$ for some $\tau$-generic flow in $\mathscr{G}^0(UT(M))$. From Lemma~\ref{lsc} the map 
$\mathfrak{P}\colon\mathscr{G}^0(UT(M))\rightarrow UT(M)^\star$
 defined by $\mathfrak{P}(\phi^t)=\overline{Per(\phi^t)}$ is lower semicontinuous on $\mathscr{G}$. It is well-known (see~\cite{F}) that the continuity points of $\mathfrak{P}|_{\mathscr{G}}$ is a residual subset $\mathscr{G}_1\subset \mathscr{G}$, hence a residual subset of $\mathscr{G}^0(UT(M))$. Let us see that if $\phi^t\in\mathscr{G}_1$ (i.e. $\phi^t$ is a continuity point of $\mathfrak{P}|_{\mathscr{G}}$) then $UT(M)=\overline{Per(\phi^t)}$. The non obvious inclusion is $UT(M)\subset\overline{Per(\phi^t)}$. Assume, by contradiction, that exists $(x,v) \in UT(M) \setminus \overline{Per(\phi^t)}$. 
By Lemma~\ref{weak} let $\{\phi_n^t\}_{n\in\mathbb{N}}\subset\mathscr{G}^0(UT(M))$ and $\{(x_n,v_n)\}_{n\in\mathbb{N}} \subset Per(\phi_n^t)$ be such that $\phi_n^t \underset{\tau}{\to} \phi^t$
and $\underset{n\rightarrow\infty}{\lim} (x_n,v_n)=(x,v)$. 
 By Lemma~\ref{perm} there exists $\{\tilde{\phi}_n^t\}_{n\in\mathbb{N}}$ such that $(x_n,v_n)\in\mathscr{P}({\tilde{\phi}_n}^t)$ and  each $\tilde{\phi}_n^t$ becomes $\tau$-arbitrarilly close to $\phi_n^t$.

Since $\mathscr{G}^0(UT(M))$ endowed with the $\tau$-topology is a Baire space we get that $\mathscr{G}$ is dense in $\mathscr{G}^0(UT(M))$. 
Therefore, there exist 
$\{\overline{\phi}_n^t\}_{n\in\mathbb{N}} \subset \mathscr{G}$ and 
$\{(\overline{x}_n,\overline{v}_n)\}_{n\in\mathbb{N}} \subset Per(\overline{\phi}_n^t)$ 
such that
$$d_{UT(M)}((x_n,v_n),(\overline{x}_n,\overline{v}_n))<\frac{1}{n}$$ and $\overline{\phi}_n^t$ becomes $\tau$-arbitrarilly close to 
$\tilde{\phi}_n^t$.

We conclude that 
$ \overline{\phi}_n^t \underset{\tau}{\to} \phi^t$
 and $\underset{n\rightarrow\infty}{\lim} (\overline{x}_n,\overline{v}_n)=(x,v)$.
Then, since $\phi^t$ is a continuity point of $ \mathfrak{P}|_{\mathscr{G}}$ we have that $\underset{n\rightarrow\infty}{\lim}\mathfrak{P}(\overline{\phi}_n^t)=\mathfrak{P}(\phi^t)$, i.e., $\underset{n\rightarrow\infty}{\lim}\overline{Per(\overline{\phi}_n^t)}=\overline{Per(\phi^t)}$. Finally, we observe that
$(x,v)\in \overline{Per(\overline{\phi}_n^t)}$, or equivalently,
$(x,v)\in \overline{Per(\phi^t)}$ which is a contradiction with the assumption that $(x,v)\in UT(M)\setminus \overline{Per(\phi^t)}$.

\end{proof}

\end{section}

\section*{Acknowledgements}

MB was partially supported by National Funds through FCT - ``Funda\c{c}\~{a}o para a Ci\^{e}ncia e a Tecnologia'', project PEst-OE/MAT/UI0212/2011. MJT was partially supported by the Research Centre of Mathematics of the University of Minho 
with the Portuguese Funds from the ``Funda\c{c}\~{a}o para a Ci\^{e}ncia e a Tecnologia", through the Project PEstOE/MAT/UI0013/2014.


\end{document}